\newtheorem{theorem}{Theorem}
\theoremstyle{definition}
\newtheorem{definition}[theorem]{Definition}
\newtheorem{question}[theorem]{Question}
\theoremstyle{remark}
\begin{document}

\title[Answering an open question in fuzzy metric spaces]
{Answering an open question in fuzzy metric spaces}

\author[X. Wu]{Xinxing Wu}
\address[X. Wu]{School of Sciences, Southwest Petroleum University, Chengdu, Sichuan 610500, China}
\email{wuxinxing5201314@163.com}

\author[G. Chen]{Guanrong Chen}
\address[G. Chen]{Department of Electrical Engineering, City University of Hong Kong,
Hong Kong SAR, China}
\email{gchen@ee.cityu.edu.hk}

\thanks{This work was supported by the National Natural Science Foundation of China
(No. 11601449), the Science and Technology Innovation Team of Education Department of Sichuan for
Dynamical System and its Applications (No. 18TD0013), and the Youth Science and Technology Innovation
Team of Southwest Petroleum University for Nonlinear Systems (No. 2017CXTD02).}

\subjclass[2010]{03E72, 54H20.}

\date{\today}


\keywords{Fuzzy metric space; $\mathbb{R}$-uniform continuity.}

\begin{abstract}
This paper answers affirmatively Problem 32 posted in
\cite{GMM2012}, proving that, for every stationary fuzzy metric space $(X, M, *)$, the function $M_y(x):=M(x,y)$
defined therein is $\mathbb{R}$-uniformly continuous for all $y\in X$, and furthermore proves that the function $M$ is
$\mathbb{R}$-uniformly continuous.
\end{abstract}

\maketitle

During the last three decades, many research works were devoted to the notion of fuzzy metric spaces in different ways
\cite{D1982,E1979,K1984}. To obtain a Hausdorff topology of a fuzzy metric space, George and Veeramani~\cite{GV1994}
modified the concept of fuzzy metric space initiated by Kramosil and Michalek \cite{KM1975} and introduced the fuzzy
metric space in a new sense as follows.

\begin{definition}\label{Def-1}{\rm  (George and Veeramani \cite{GV1994}). A {\it fuzzy metric space} is an ordered triple
$(X, M, \ast)$ such that $X$ is a nonempty set, $\ast$ is a continuous $t$-norm, and $M$ is a fuzzy set
on $X\times X\times (0, +\infty)$ satisfying the following conditions, for all $x, y, z\in X$, $s, t>0$:
\begin{itemize}
\item[(GV1)] $M(x, y, t)>0$;

\item[(GV2)] $M(x, y, t)=1$ if and only if $x=y$;

\item[(GV3)] $M(x, y, t)=M(y, x, t)$;

\item[(GV4)] $M(x, y, t)\ast M(y, z, s)\leq M(x, z, t + s)$;

\item[(GV5)] $M(x, y, \_ ): (0, +\infty) \longrightarrow (0, 1]$ is continuous.
\end{itemize}
$(M, \ast)$, or simply $M$, is usually called a {\it fuzzy metric} on $X$,
and each value $M(x, y, t)$ can be understood as the nearness degree between $x$
and $y$ with respect to $t$. }
\end{definition}

According to Grabiec~\cite{G1989}, the real function $M(x, y, \_ )$ of Axiom (GV5) is nondecreasing for all $x, y\in X$.
George and Veeramani~\cite{GV1994} proved that every fuzzy metric space $(X, M, \ast)$ generates a topology $\mathscr{T}_{M}$
on $X$, which has as a base the family of open sets of the form $\{B_{M}(x, \varepsilon, t): x\in X, 0<\varepsilon<1, t>0\}$, where
$B_{M}(x, \varepsilon, t)=\{y\in X: M(x, y, t)>1-\varepsilon\}$.

Let $(X, d)$ be a metric space and $M_{d}$ be a function on $X\times X\times (0, +\infty)$ defined by
$$
M_{d}(x, y, t)=\frac{t}{t+d(x, y)}, \text{ for } (x, y, t)\in X\times X\times (0,+\infty).
$$
It can be verified that $(X, M_{d}, \cdot)$ is a fuzzy metric space, and $M_{d}$ is called the {\it standard fuzzy metric}
induced by $d$. This shows that every ordinary metric induces a fuzzy metric in the sense of George and Veeramani.

\begin{definition}{\rm \cite{GR2004}
A fuzzy metric $M$ on $X$ is {\it stationary} if, for any $x, y\in X$,
the function $M(x, y, \_ )$ is constant. In this case, write $M(x, y)$ instead of $M(x, y, \_ )$.}
\end{definition}

\begin{definition}{\rm (Gregori et al. \cite{GRS2001}).
Let$(X, M, \ast)$ be a fuzzy metric space. A mapping $f: X\longrightarrow \mathbb{R}$ is {\it $\mathbb{R}$-uniformly
continuous} if, for any $\varepsilon>0$, there exist $s>0$ and $\delta\in (0, 1)$ such that $M(x, y, s)>1-\delta$
implies $|f(x)-f(y)|<\varepsilon$.}
\end{definition}

Recently, Gregori et al. \cite{GMM2012} studied the fuzzy metric defined by
$M(x, y, t)=\frac{\min\{x, y\}+t}{\max\{x, y\}+t}$ and other fuzzy metrics related to it. In particular, they \cite{GMM2012}
proposed the following question for the real function $M(x, y)$ on stationary fuzzy metric spaces.

\begin{question}\label{Q-1}{\rm \cite[Problem~32]{GMM2012}
Let $(X, M, *)$ be a stationary fuzzy metric space. Is the real function $M_y(x):=M(x,y)$,
for each $x\in X$, $\mathbb{R}$-uniformly continuous for all $y\in X$?}
\end{question}

The following theorem proves that the function $M_{y}$ is $\mathbb{R}$-uniformly continuous for stationary fuzzy metric spaces,
giving a positive answer to Question~\ref{Q-1}. Denote $\mathbb{N}=\left\{1, 2, 3, \ldots\right\}$.

\begin{theorem}\label{Thm-continuous}
Let $(X, M, \ast)$ be a stationary fuzzy metric space. Then, for any $y\in X$, the function $M_y(x)=M(x,y)$
is $\mathbb{R}$-uniformly continuous.
\end{theorem}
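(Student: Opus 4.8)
The plan is to combine two ingredients: the stationary triangle inequality $M(x,z)\ge M(x,y)\ast M(y,z)$, which is just (GV4) with the $t$-coordinate suppressed (together with the symmetry (GV3)), and the uniform continuity of the $t$-norm $\ast$ on the compact square $[0,1]^2$, together with the boundary identity $1\ast c=c$. Since $M$ is stationary, the parameter $s$ appearing in the definition of $\mathbb{R}$-uniform continuity is irrelevant, so I would simply fix $s:=1$ throughout; the task then reduces to showing that for every $\varepsilon>0$ there is a $\delta\in(0,1)$ such that $M(a,b)>1-\delta$ forces $|M(a,y)-M(b,y)|<\varepsilon$ for all $a,b\in X$.

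First I would fix $y\in X$ and $\varepsilon>0$. Because $\ast$ is a continuous $t$-norm, it is uniformly continuous on $[0,1]^2$; since $1\ast c=c$ for every $c\in[0,1]$ and $r\ast c\le 1\ast c=c$ by monotonicity of $\ast$, there is a $\delta\in(0,1)$ such that $1-\delta<r\le 1$ implies $c-\varepsilon< r\ast c\le c$ for \emph{all} $c\in[0,1]$ simultaneously. (If one prefers to avoid quoting uniform continuity, the same statement follows by noting that $r\mapsto \sup_{c\in[0,1]}(c-r\ast c)$ is nonnegative, nonincreasing, and tends to $0$ as $r\to 1^-$, by a subsequence/compactness argument using continuity of $\ast$ and $1\ast c=c$.) Recall also that by (GV1) and (GV5) every value of $M$ lies in $(0,1]\subseteq[0,1]$, so this estimate may legitimately be applied with $c$ equal to $M(a,y)$ or $M(b,y)$.

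Next, suppose $a,b\in X$ satisfy $M(a,b)>1-\delta$. Applying the stationary triangle inequality with intermediate point $b$ gives $M(a,y)\ge M(a,b)\ast M(b,y)$, and since $M(a,b)>1-\delta$ this yields $M(a,y)>M(b,y)-\varepsilon$, i.e. $M(b,y)-M(a,y)<\varepsilon$. Exchanging the roles of $a$ and $b$ and using $M(b,a)=M(a,b)$ gives symmetrically $M(b,y)\ge M(a,b)\ast M(a,y)>M(a,y)-\varepsilon$, hence $M(a,y)-M(b,y)<\varepsilon$. Combining the two, $|M_y(a)-M_y(b)|=|M(a,y)-M(b,y)|<\varepsilon$, so with $s=1$ the condition $M(a,b,s)>1-\delta$ implies $|M_y(a)-M_y(b)|<\varepsilon$, which is precisely $\mathbb{R}$-uniform continuity of $M_y$.

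I do not expect a genuine obstacle; the only point needing a little care is making the approximation $r\ast c\approx c$ uniform in the second coordinate $c$, so that a single $\delta$ handles $c=M(a,y)$, $c=M(b,y)$, and all pairs $a,b$ at once. This is exactly why one invokes compactness of $[0,1]^2$ (equivalently, uniform continuity of $\ast$) rather than merely the continuity of $\ast$ at the individual points $(1,c)$.
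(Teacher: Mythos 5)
Your proof is correct. The two essential ingredients are the same as in the paper's argument --- the stationary triangle inequality $M(a,b)\ast M(b,y)\le M(a,y)$ and the fact that $r\ast c\to 1\ast c=c$ as $r\to 1^-$ --- but you organize them into a direct proof with an explicit modulus, whereas the paper argues by contradiction: it extracts sequences $x_n,z_n$ with $M(x_n,z_n)>1-\tfrac1n$ and $|M_y(x_n)-M_y(z_n)|\ge\varepsilon_0$, passes to subsequences along which $M_y(x_{n_k})$ and $M_y(z_{n_k})$ converge, and then derives $0\le-\varepsilon_0$ from the same triangle inequality and the continuity of $\ast$. The uniformity issue you flag is real and you handle it properly: your key estimate, that for each $\varepsilon>0$ there is $\delta\in(0,1)$ with $c-\varepsilon<r\ast c\le c$ for \emph{all} $c\in[0,1]$ whenever $r>1-\delta$, follows either from uniform continuity of $\ast$ on the compact square $[0,1]^2$ (comparing $(r,c)$ with $(1,c)$) or from the subsequence argument you sketch; the paper's subsequence extraction plays exactly this role, just hidden inside the contradiction. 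Your version buys a cleaner, quantitative statement (a single $\delta$ working for all pairs $a,b$, independent of $y$ in fact), and it avoids the ``without loss of generality'' case split on the sign of $\xi-\eta$ that the paper needs; the paper's version avoids naming the uniform modulus explicitly. Both are complete; no gap.
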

\begin{proof}
Fix any $y\in X$, suppose on the contrary that $M_y$ is not $\mathbb{R}$-uniformly continuous. Then, there exists
$\varepsilon_0>0$ such that, for any $n\in \mathbb{N}$, there exist $x_n, z_n\in X$ with $M(x_n, z_n)>1-\frac{1}{n}$
satisfying $|M_{y}(x_n)-M_{y}(z_n)|\geq \varepsilon_0$. From $M_{y}(x_n), M_{y}(z_n)\in [0, 1]$, it follows that there exists
an increasing sequence $\{n_k\}_{k=1}^{+\infty}\subset \mathbb{N}$ such that
$\lim_{k\to +\infty} M_{y}(x_{n_k})=\xi$ and $\lim_{k\to +\infty} M_{y}(z_{n_k})=\eta$.
Clearly, $|\xi-\eta|\geq \varepsilon_0$.

\medskip

Consider the following two cases:
\begin{itemize}
\item[Case 1.] If $\xi>\eta$, without loss of generality, assume that $M_{y}(x_{n_k})>M_{y}(z_{n_k})$ for all $k\in \mathbb{N}$. This,
together with $M(y, x_{n_k})\ast M(x_{n_k}, z_{n_k})\leq M(y, z_{n_k})$, implies that
$$
M(y, x_{n_k})\ast M(x_{n_k}, z_{n_k})-M(y, x_{n_k})\leq M(y, z_{n_k})- M(y, x_{n_k})=
M_{y}(z_{n_k})-M_{y}(x_{n_k})\leq -\varepsilon_0.
$$
Then,
$$
0=\xi\ast 1-\xi=\lim_{k\to +\infty}(M(y, x_{n_k})\ast M(x_{n_k}, z_{n_k})-M(y, x_{n_k}))\leq  -\varepsilon_0,
$$
which is a contradiction.
\item[Case 2.] If $\xi<\eta$, without loss of generality, assume that $M_{y}(x_{n_k}) < M_{y}(z_{n_k})$ for all $k\in \mathbb{N}$. This,
together with $M(y, z_{n_k})\ast M(z_{n_k}, x_{n_k})\leq M(y, x_{n_k})$, implies that
$$
\varepsilon_0 \leq M_{y}(z_{n_k})-M_{y}(x_{n_k})= M(y, z_{n_k})-M(y, x_{n_k}) \leq
M(y, z_{n_k})-M(y, z_{n_k})\ast M(z_{n_k}, x_{n_k}).
$$
Then,
$$
\varepsilon_0 \leq \lim_{k\to +\infty}(M(y, z_{n_k})-M(y, z_{n_k})\ast M(z_{n_k}, x_{n_k}))=\eta-\eta\ast 1=0,
$$
which is a contradiction.
\end{itemize}
Therefore, $M_{y}$ is $\mathbb{R}$-uniformly continuous.
\end{proof}

Next, let $(X, M, \ast)$ be a stationary fuzzy metric space and $\mathscr{T}_{M}$ be the topology generated by
the fuzzy metric $M$. Define $\hat{M}: X^2 \times X^2 \times (0, +\infty)\longrightarrow [0, 1]$ by
$$
\hat{M} ((x_1, x_2), (y_1, y_2), t)=\min\left\{M(x_1, y_1), M(x_2, y_2)\right\},
\text{ for } ((x_1, x_2), (y_1, y_2), t)\in X^2\times X^2\times (0, +\infty).
$$
From \cite[Proposition~4.1, Theorem~4.4]{MS2016}, it follows that
\begin{enumerate}[(1)]
  \item $(\hat{M}, \ast)$ is a fuzzy metric on $X^2$, i.e., $(X^2, \hat{M}, \ast)$
is a fuzzy metric space;
  \item the topology $\mathscr{T}_{\hat{M}}$ generated by $\hat{M}$ is the product topology for $X^2$.
\end{enumerate}

Similarly to Theorem~\ref{Thm-continuous}, the following theorem proves the $\mathbb{R}$-uniform
continuity of $M$ on $(X^2, \hat{M}, \ast)$.

\begin{theorem}
Let $(X, M, \ast)$ be a stationary fuzzy metric space. Then, the function
\begin{eqnarray*}
M: (X^2, \hat{M}, \ast) & \longrightarrow & (0, 1],\\
(x, y) & \longmapsto & M(x, y),
\end{eqnarray*}
is $\mathbb{R}$-uniformly continuous.
\end{theorem}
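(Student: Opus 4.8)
The plan is to argue by contradiction, transcribing almost verbatim the scheme used in the proof of Theorem~\ref{Thm-continuous}, with the $t$-norm applied three times instead of once. First note that, by \cite[Proposition~4.1, Theorem~4.4]{MS2016}, $(X^2,\hat M,\ast)$ is genuinely a fuzzy metric space, so $\mathbb{R}$-uniform continuity of a map defined on it makes sense. Suppose $M$ is not $\mathbb{R}$-uniformly continuous on $(X^2,\hat M,\ast)$. Then there is $\varepsilon_0>0$ such that, for every $n\in\mathbb{N}$, there exist $(x_n,y_n),(u_n,v_n)\in X^2$ with
$$
\hat M\bigl((x_n,y_n),(u_n,v_n),1\bigr)=\min\{M(x_n,u_n),M(y_n,v_n)\}>1-\tfrac1n ,
$$
i.e. $M(x_n,u_n)>1-\tfrac1n$ and $M(y_n,v_n)>1-\tfrac1n$, while $|M(x_n,y_n)-M(u_n,v_n)|\ge\varepsilon_0$. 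Since $M(x_n,y_n),M(u_n,v_n)\in[0,1]$, I pass to an increasing sequence $\{n_k\}_{k=1}^{+\infty}\subset\mathbb{N}$ along which $M(x_{n_k},y_{n_k})\to\xi$ and $M(u_{n_k},v_{n_k})\to\eta$; then $|\xi-\eta|\ge\varepsilon_0$, so in particular $\xi\neq\eta$.

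The key step is to estimate $M(x_n,y_n)$ and $M(u_n,v_n)$ against one another by inserting the points $u_n,v_n$ (respectively $x_n,y_n$) through two applications of (GV4), using also (GV3) and the associativity and monotonicity of $\ast$:
$$
M(x_n,u_n)\ast M(u_n,v_n)\ast M(v_n,y_n)\le M(x_n,y_n),
\qquad
M(u_n,x_n)\ast M(x_n,y_n)\ast M(y_n,v_n)\le M(u_n,v_n).
$$
Letting $k\to+\infty$ along $\{n_k\}$ and using $M(x_{n_k},u_{n_k})\to1$, $M(y_{n_k},v_{n_k})\to1$, together with the continuity of $\ast$ and the fact that $1$ is its unit, the first inequality gives $1\ast\eta\ast1=\eta\le\xi$ and the second gives $1\ast\xi\ast1=\xi\le\eta$. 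Hence $\xi=\eta$, contradicting $|\xi-\eta|\ge\varepsilon_0>0$; therefore $M$ is $\mathbb{R}$-uniformly continuous on $(X^2,\hat M,\ast)$.

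There is no genuine obstacle here beyond two routine verifications one should perform carefully: that the hypothesis $\hat M((x_n,y_n),(u_n,v_n),1)>1-\frac1n$ decouples into the two coordinatewise bounds (immediate from the definition of $\hat M$ as a minimum), and that the limit may be pushed through the triple $\ast$-product — which is legitimate precisely because $\ast$ is continuous, associative, and has $1$ as identity, so that $a_k\ast b_k\ast c_k\to 1\ast\eta\ast 1=\eta$ whenever $a_k,c_k\to1$ and $b_k\to\eta$. Everything else is a direct adaptation of the argument already carried out for $M_y$.
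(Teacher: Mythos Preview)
Your proof is correct and follows essentially the same route as the paper's: argue by contradiction, extract a subsequence on which $M(x_{n_k},y_{n_k})\to\xi$ and $M(u_{n_k},v_{n_k})\to\eta$, apply (GV4) twice to get the triple $\ast$-product estimates, and pass to the limit using continuity of $\ast$ with unit $1$. The only (cosmetic) difference is that the paper splits into the cases $\xi>\eta$ and $\xi<\eta$ and derives $0\le -\varepsilon_0$ in each, whereas you use both inequalities symmetrically to conclude $\eta\le\xi$ and $\xi\le\eta$, hence $\xi=\eta$, which is a slight streamlining of the same argument.
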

\begin{proof}
Suppose on the contrary that $M$ is not $\mathbb{R}$-uniformly continuous, which means that there exists
$\varepsilon_0>0$ such that, for any $n\in \mathbb{N}$, there exist $(x_n, z_n), (x'_n, z'_n)\in X^2$ with
$\hat{M}((x_n, z_n), (x'_n, z'_n))>1-\frac{1}{n}$ satisfying $|M(x_n, z_n)-M(x'_n, z'_n)|\geq \varepsilon_0$.
From $M(x_n, z_n), M(x'_n, z'_n)\in [0, 1]$, it follows that there exists an increasing sequence
$\{n_k\}_{k=1}^{+\infty}\subset \mathbb{N}$ such that
$\lim_{k\to +\infty} M(x_{n_k}, z_{n_k})=\xi$ and $\lim_{k\to +\infty} M(x'_{n_k}, z'_{n_k})=\eta$.
Clearly, $|\xi-\eta|\geq \varepsilon_0$.

\medskip

Consider the following two cases:
\begin{itemize}
\item[Case 1.] If $\xi>\eta$, without loss of generality, assume that $M(x_{n_k}, z_{n_k})>M(x'_{n_k}, z'_{n_k})$
for all $k\in \mathbb{N}$. This, together with $M(x_{n_k}, z_{n_k})\ast M(z'_{n_k}, z_{n_k}) \ast M(x'_{n_k}, x_{n_k})
\leq M(x'_{n_k}, z'_{n_k})$, implies that
$$
M(x_{n_k}, z_{n_k})\ast M(z'_{n_k}, z_{n_k}) \ast M(x'_{n_k}, x_{n_k})-M(x_{n_k}, z_{n_k})\leq M(x'_{n_k}, z'_{n_k})-M(x_{n_k}, z_{n_k})
\leq -\varepsilon_0.
$$
Then,
$$
0=\xi\ast 1 \ast 1-\xi=\lim_{k\to +\infty}(M(x_{n_k}, z_{n_k})\ast M(z'_{n_k}, z_{n_k}) \ast M(x'_{n_k}, x_{n_k})-M(x_{n_k}, z_{n_k}))
\leq  -\varepsilon_0,
$$
which is a contradiction.
\item[Case 2.] If $\xi<\eta$, without loss of generality, assume that $M(x_{n_k}, z_{n_k}) < M(x'_{n_k}, z'_{n_k})$ for all $k\in \mathbb{N}$. This,
together with $M(x'_{n_k}, z'_{n_k})\ast M(z'_{n_k}, z_{n_k}) \ast M(x'_{n_k}, x_{n_k})\leq M(x_{n_k}, z_{n_k})$, implies that
$$
\varepsilon_0 \leq M(x'_{n_k}, z'_{n_k})-M(x_{n_k}, z_{n_k}) \leq M(x'_{n_k}, z'_{n_k})-M(x'_{n_k}, z'_{n_k})\ast M(z'_{n_k}, z_{n_k})
\ast M(x'_{n_k}, x_{n_k}).
$$
Then,
$$
\varepsilon_0 \leq \lim_{k\to +\infty}( M(x'_{n_k}, z'_{n_k})-M(x'_{n_k}, z'_{n_k})\ast M(z'_{n_k}, z_{n_k}) \ast M(x'_{n_k}, x_{n_k}))
=\eta -\eta \ast 1\ast 1=0,
$$
which is a contradiction.
\end{itemize}
Therefore, $M$ is $\mathbb{R}$-uniformly continuous.
\end{proof}


\bibliographystyle{amsplain}

\end{document}